\numberwithin{equation}{section}
\newtheorem{thm}[equation]{Theorem}
\newtheorem{rem}[equation]{Remark}
\newtheorem{lem}[equation]{Lemma}
\begin{document}

\title[biharmonic Neumann eigenvalues]
 {A note on the Neumann eigenvalues of the biharmonic operator}

\author{Luigi Provenzano}
\address{Dipartimento di Matematica\\
Universit\`a degli Studi di Padova\\
Via Trieste, 63\\
35126 Padova\\
Italy}
\email{luigiprovenz@gmail.com}

\subjclass[2010]{Primary 35J30; Secondary 35J40; 47A75; 35P15; 49R05.}

\keywords{Biharmonic operator; Neumann boundary conditions; eigenvalues; Poisson's ratio.}


\begin{abstract}
We study the dependence of the eigenvalues of the biharmonic operator subject to Neumann boundary conditions on the Poisson's ratio $\sigma$. In particular, we prove that the Neumann eigenvalues are Lipschitz continuous with respect to $\sigma\in[0,1[$ and that all the Neumann eigenvalues tend to zero as $\sigma\rightarrow 1^-$. Moreover, we show that the Neumann problem defined by setting $\sigma=1$ admits a sequence of positive eigenvalues of finite multiplicity which are not limiting points for the Neumann eigenvalues with $\sigma\in[0,1[$ as $\sigma\rightarrow 1^-$, and which coincide with the Dirichlet eigenvalues of the biharmonic operator.
\end{abstract}

\maketitle

\maketitle

\section{Introduction}
Let $\Omega$ be a bounded domain in $\mathbb R^N$ (i.e., a bounded connected open set) of class $C^{4,\alpha}$ for some $\alpha\in]0,1[$. Let $\sigma\in[0,1[$. We consider the Neumann eigenvalue problem for the biharmonic operator, namely the problem
\begin{equation}\label{Neumann_sigma}
\begin{cases}
\Delta^2 u=\lambda u, & {\rm in}\ \Omega,\\
(1-\sigma)\frac{\partial^2 u}{\partial\nu^2}+\sigma\Delta u=0, & {\rm on}\ \partial\Omega,\\
\frac{\partial\Delta u}{\partial\nu}+(1-\sigma){\rm div}_{\partial\Omega}\left(D^2 u\cdot\nu\right)_{\partial\Omega}=0, & {\rm on}\ \partial\Omega,
\end{cases}
\end{equation}
in the unknowns $u$ (the eigenfunction) and $\lambda$ (the eigenvalue). Here $\nu$ is the outer unit normal to $\partial\Omega$, ${\rm div}_{\partial\Omega}F$ denotes the tangential divergence  of a vector field $F$, which is defined by ${\rm div}_{\partial\Omega}F={\rm div}F_{|_{\partial\Omega}}-(DF\cdot\nu)\cdot\nu$, $F_{\partial\Omega}$ denotes the projection of a vector field $F$ onto the tangent space to $\partial\Omega$, and $D^2u$ is the Hessian matrix of $u$ (we refer to \cite{chasman} for the derivation of the boundary conditions in \eqref{Neumann_sigma}). For $N=2$ this problem is related to the study of the transverse vibrations of a thin plate with a free edge and which occupies at rest a planar region of shape $\Omega$. The coefficient $\sigma$ represents the Poisson's ratio of the material the plate is made of. We refer e.g., to \cite{cohil} for more details on the physical interpretation of problem \eqref{Neumann_sigma} and on the Poisson's ratio $\sigma$. We mention the paper \cite{duffin}, where the author studies the dependence of the vibrational modes  of a plate subject to homogeneous boundary conditions upon the Poisson's ratio $\sigma\in]0,\frac{1}{2}[$, providing also a perturbation formula for the frequencies as functions of the Poisson's coefficient. 

We note that eigenvalue problems for the biharmonic operator have gained significant attention in the last decades. In particular, there are several papers concerning the dependence of the eigenvalues upon different parameters which enter the problem, such as the shape or the coefficients. We refer to the book \cite{henrot} for more information on shape optimization problems for the biharmonic operator. We also refer to \cite{laproeurasian} where it is discussed the dependence of the eigenvalues of polyharmonic operators upon variation of the mass density, and to \cite{buosoprovenzano} where the authors consider Neumann and Steklov-type eigenvalue problems for the biharmonic operator with particular attention to shape optimization and mass concentration phenomena. We also mention \cite{buosopalinuro}, where the author considers the shape sensitivity problem for the eigenvalues of the biharmonic operator (in particular, also those of problem \eqref{Neumann_sigma}) for $\sigma\in]-\frac{1}{N-1},1[$. We note that other issues have been addressed in the literature for polyharmonic operators, such as analyticity, continuity and stability estimates for the eigenvalues with respect to the shape; we refer to \cite{arrietalamberti0,arrietalamberti1,buosolamberti1,buosolamberti2,burenkovlamberti2,burenkovlamberti1} and the references therein.

We recall that problem \eqref{Neumann_sigma} admits an infinite sequence of non-negative eigenvalues of finite multiplicity which depend on $\sigma\in[0,1[$ and which we denote here by
$$
0=\lambda_1(\sigma)=\lambda_2(\sigma)=\cdots=\lambda_{N+1}(\sigma)<\lambda_{N+2}(\sigma)\leq\cdots\leq\lambda_j(\sigma)\leq\cdots.
$$
We note that $\lambda=0$ is an eigenvalue of \eqref{Neumann_sigma} of multiplicity $N+1$, and a set of linearly independent eigenfunctions associated with $\lambda=0$ is given by $\left\{1,x_1,...,x_N\right\}$.

If we set $\sigma=1$, problem \eqref{Neumann_sigma} reads
\begin{equation}\label{Neumann_bad}
\begin{cases}
\Delta^2 u=\lambda u, & {\rm in}\ \Omega,\\
\Delta u=0, & {\rm on}\ \partial\Omega,\\
\frac{\partial\Delta u}{\partial\nu}=0, & {\rm on}\ \partial\Omega.
\end{cases}
\end{equation}
We note that the differential operator associated with problem \eqref{Neumann_bad} is not a Fredholm operator. Indeed all the harmonic functions in $\Omega$ are eigenfunctions corresponding to the eigenvalue $\lambda=0$. We also note that the boundary conditions in \eqref{Neumann_bad} do not satisfy the so-called `complementing conditions' (see \cite[\S 10]{agmon1} and \cite{gazzola} for details), which are necessary conditions for the well-posedness of a differential problem. Nevertheless, problem \eqref{Neumann_bad} admits a countable number of positive eigenvalues of finite multiplicity diverging to $+\infty$, which we denote here by
\begin{equation*}
0<\lambda_1\leq\lambda_2\leq\cdots\leq\lambda_j\leq\cdots.
\end{equation*}

In this paper we show that $\lambda_j(\sigma)\rightarrow 0$ as $\sigma\rightarrow 1^-$ for all $j\in\mathbb N$. Thus, the positive eigenvalues of problem \eqref{Neumann_bad} are not limiting points for the eigenvalues of problem \eqref{Neumann_sigma} as $\sigma\rightarrow 1^-$. Moreover, we show that the positive eigenvalues $\lambda_j$ of problem \eqref{Neumann_bad} coincide with the eigenvalues of the Dirichlet problem for the biharmonic operator, namely the problem
\begin{equation}\label{Dirichlet}
\begin{cases}
\Delta^2 w=\mu w, & {\rm in}\ \Omega,\\
w=0, & {\rm on}\ \partial\Omega,\\
\frac{\partial w}{\partial\nu}=0, & {\rm on}\ \partial\Omega.
\end{cases}
\end{equation}
We recall that, for $N=2$, problem \eqref{Dirichlet} models the transverse vibrations of a thin plate which has a clamped edge (see e.g., \cite{cohil} for details). We also recall that the eigenvalues of \eqref{Dirichlet} are positive and of finite multiplicity and form an increasing sequence diverging to $+\infty$, which we denote here by
\begin{equation}\label{dirichlet_sequence}
0<\mu_1\leq\mu_2\leq\cdots\leq\mu_j\leq\cdots.
\end{equation}

The present paper is organized as follows: in Section \ref{sec:2} we characterize the eigenvalues of problems \eqref{Neumann_sigma}, \eqref{Neumann_bad} and \eqref{Dirichlet}. In Section \ref{sec:3} we prove that all the eigenvalues of problem \eqref{Neumann_sigma} go to zero as $\sigma\rightarrow 1^-$ and moreover, we prove that $\lambda_j=\mu_j$, for all $j\in\mathbb N$. 
Finally, in Section \ref{sec:4}, we consider problems \eqref{Neumann_sigma}, \eqref{Neumann_bad} and \eqref{Dirichlet} in the case of the unit ball in $\mathbb R^N$ centered at zero, where it is possible to recover the results of Section \ref{sec:3} thanks to explicit computations.

\section{Eigenvalues of Neumann and Dirichlet problems}\label{sec:2}
We consider problems \eqref{Neumann_sigma}, \eqref{Neumann_bad} and \eqref{Dirichlet} in their weak formulation. The weak formulation of problem \eqref{Neumann_sigma} when $\sigma\in[0,1[$ is
\begin{equation}\label{weak}
\int_{\Omega}(1-\sigma)D^2u:D^2\varphi+\sigma\Delta u\Delta\varphi dx=\lambda\int_{\Omega}u\varphi dx\,,
\end{equation}
for all $\varphi\in H^2(\Omega)$, in the unknowns $u\in H^2(\Omega)$, $\lambda\in\mathbb R$, where $D^2u:D^2\varphi=\sum_{i,j=1}^N\frac{\partial^2u}{\partial x_i\partial x_j}\frac{\partial^2\varphi}{\partial x_i\partial x_j}$ denotes the Frobenius product. Actually we will recast problem \eqref{weak} in $H^2(\Omega)/\mathcal{N}$, where $\mathcal{N}\subset H^2(\Omega)$ is the subspace of $H^2(\Omega)$ generated by the functions $\lbrace{1,x_1,...,x_N\rbrace}$. To do so, we set
$$
H^2_{\mathcal{N}}(\Omega):=\left\{u\in H^2(\Omega)\,:\ \int_\Omega u dx=\int_{\Omega}\frac{\partial u}{\partial x_i}dx=0\,,\forall i=1,...,N\right\}.
$$
In the sequel we will think of the space $H^2_{\mathcal{N}}(\Omega)$ as endowed with the bilinear form given by the left-hand side of \eqref{weak}. From the fact that $|D^2u|^2\geq\frac{1}{N}(\Delta u)^2$ for all $u\in H^2(\Omega)$ and from the Poincar\'e-Wirtinger inequality, it follows that such bilinear form defines on $H^2_{\mathcal{N}}(\Omega)$ a scalar product whose induced norm is equivalent to the standard one. We denote by $\pi_{\mathcal{N}}$ the map from $H^2(\Omega)$ to $H^2_{\mathcal{N}}(\Omega)$ defined by
$$
\pi_{\mathcal{N}}[u]:=u-\frac{1}{|\Omega|}\int_{\Omega}u+\frac{1}{|\Omega|^2}\sum_{i=1}^N\left(\int_{\Omega}\frac{\partial u}{\partial x_i}dx\right)\left(\int_{\Omega}x_i dx\right)-\frac{1}{|\Omega|}\sum_{i=1}^N\left(\int_{\Omega}\frac{\partial u}{\partial x_i}dx\right)x_i,
$$
for all $u\in H^2(\Omega)$. We denote by $\pi^{\sharp}_{\mathcal{N}}$ the map from $H^2(\Omega)/{\mathcal{N}}$ onto $H^2_{\mathcal{N}}(\Omega)$ defined by the equality $\pi_{\mathcal{N}}=\pi^{\sharp}_{\mathcal{N}}\circ p$, where $p$ is the canonical projection of $H^2(\Omega)$ onto $H^2(\Omega)/N$. The map $\pi^{\sharp}_{\mathcal{N}}$ turns out to be a homeomorphism. Let $F(\Omega)$ be defined by 
$$
F(\Omega):=\left\{G\in H^2(\Omega)':G[1]=G[x_i]=0\,,\ \forall i=1,..,N\right\}.
$$
Then we consider the operator $\mathcal P_{\sigma}$ as an operator from $H^2_{\mathcal{N}}(\Omega)$ to $F(\Omega)$ defined by
\begin{equation*}
\mathcal P_{\sigma}[u][\varphi]:=\int_{\Omega}(1-\sigma)D^2u:D^2\varphi+\sigma\Delta u\Delta\varphi dx\,,\ \ \ \forall u\in H^2_{\mathcal{N}}(\Omega),\varphi\in H^2(\Omega).
\end{equation*}
It turns out that $\mathcal P_{\sigma}$ is a homeomorphism of $H^2_{\mathcal{N}}(\Omega)$ onto $F(\Omega)$. We denote by $\mathcal J$ the continuous embedding of $L^2(\Omega)$  into $H^2(\Omega)'$ defined by
$$
\mathcal J[u][\varphi]:=\int_{\Omega}u\varphi dx\,,\ \ \ \forall u\in L^2(\Omega),\varphi\in H^2(\Omega).
$$
Finally, we define the operator $T_{\sigma}$ acting on $H^2(\Omega)/{\mathcal{N}}$ as follows:
$$T_{\sigma}=(\pi^{\sharp}_{\mathcal{N}})^{(-1)}\circ\mathcal P_{\sigma}^{(-1)}\circ\mathcal J\circ i\circ\pi^{\sharp}_{{\mathcal{N}}},$$
where $i$ denotes the embedding of $H^2(\Omega)$ into $L^2(\Omega)$.
\begin{lem}
The pair $(\lambda,u)$ of the set $(\mathbb R\setminus \lbrace 0\rbrace)\times(H^2_{\mathcal{N}}(\Omega)\setminus\lbrace 0\rbrace)$ satisfies \eqref{weak} if and only if $\lambda>0$ and the pair $(\lambda^{-1},p[u])$ of the set $\mathbb R\times(H^2(\Omega)/{\mathcal{N}}\setminus\lbrace 0\rbrace)$ satisfies the equation $\lambda^{-1}p[u]=T_{\sigma}p[u]$.
\end{lem}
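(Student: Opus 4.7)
The plan is to verify the equivalence by shuttling between the weak formulation and the fixed-point equation for $T_{\sigma}$ via the chain of maps that defines $T_{\sigma}$. For the forward implication I would first test \eqref{weak} with $\varphi = u$ itself: the left-hand side becomes $\mathcal P_{\sigma}[u][u]$, which is the squared norm of $u$ with respect to the scalar product that, by the remark after the definition of $H^2_{\mathcal{N}}(\Omega)$ (combining $|D^2u|^2 \geq \frac{1}{N}(\Delta u)^2$ with Poincar\'e--Wirtinger), is equivalent to the standard $H^2$-norm; hence it is strictly positive because $u \neq 0$. The right-hand side equals $\lambda\|u\|_{L^2(\Omega)}^2$ with $\|u\|_{L^2(\Omega)}>0$, so $\lambda > 0$ follows immediately.

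Next I would interpret \eqref{weak} as the identity $\mathcal P_{\sigma}[u] = \lambda\,\mathcal J[u]$ in $H^2(\Omega)'$. Since $\mathcal P_{\sigma}[u] \in F(\Omega)$, this forces $\mathcal J[u] \in F(\Omega)$ as well (which in passing gives the nontrivial orthogonality $\int_{\Omega} u\,x_i\,dx = 0$, recovered by taking $\varphi = x_i$ in \eqref{weak}), so that $\mathcal P_{\sigma}^{-1}(\mathcal J[u])$ is well defined. Dividing by $\lambda$ and pulling back through $\mathcal P_{\sigma}^{-1}$ and $(\pi^{\sharp}_{\mathcal{N}})^{-1}$, and using that $\pi^{\sharp}_{\mathcal{N}}(p[u]) = \pi_{\mathcal{N}}[u] = u$ for $u \in H^2_{\mathcal{N}}(\Omega)$, I would arrive at $\lambda^{-1} p[u] = T_{\sigma}\,p[u]$.

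The reverse implication is the same argument read backwards: starting from $\lambda^{-1}p[u] = T_{\sigma} p[u]$ with $u \in H^2_{\mathcal{N}}(\Omega)\setminus\{0\}$ and $\lambda > 0$, I would successively apply $\pi^{\sharp}_{\mathcal{N}}$ and $\mathcal P_{\sigma}$ (both homeomorphisms, by the statements in the text) to recover $\mathcal P_{\sigma}[u] = \lambda\,\mathcal J[u]$ in $H^2(\Omega)'$; pairing with an arbitrary $\varphi \in H^2(\Omega)$ yields \eqref{weak} verbatim.

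The principal nuisance is bookkeeping the domains and codomains along the composition defining $T_{\sigma}$. In particular, one must track that $\mathcal J[u] \in F(\Omega)$ at the step where $\mathcal P_{\sigma}^{-1}$ is applied; this is not built into the definition of $H^2_{\mathcal{N}}(\Omega)$ and is instead extracted from the eigenvalue equation itself. Once this point is clarified, the proof is a clean chase through the definitions, and both directions follow from the bijectivity of $\pi^{\sharp}_{\mathcal{N}}$ and $\mathcal P_{\sigma}$ onto their natural target spaces.
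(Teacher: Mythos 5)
The paper states this lemma without proof, and your argument is a correct and complete justification taking the expected route: testing \eqref{weak} with $\varphi=u$ gives $\lambda>0$ via the scalar-product property of the bilinear form on $H^2_{\mathcal{N}}(\Omega)$, and the rest is the definition-chase through $\pi^{\sharp}_{\mathcal{N}}$, $\mathcal P_{\sigma}$ and $\mathcal J$, using that $\pi^{\sharp}_{\mathcal{N}}(p[u])=\pi_{\mathcal{N}}[u]=u$ for $u\in H^2_{\mathcal{N}}(\Omega)$. You also correctly isolate the one genuinely non-automatic point, namely that $\mathcal J[u]\in F(\Omega)$ (i.e.\ $\int_{\Omega}u\,x_i\,dx=0$) is not guaranteed by membership in $H^2_{\mathcal{N}}(\Omega)$ but must be extracted from the equation itself by testing with $\varphi=x_i$ and using $\lambda\neq 0$.
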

We have the following theorem.
\begin{thm}\label{eigenvalues_sigma}
The operator $T_{\sigma}$ is a non-negative compact self-adjoint operator in $H^2(\Omega)/{\mathcal{N}}$, whose eigenvalues coincide with the reciprocals of the positive eigenvalues of problem \eqref{weak}. In particular, the set of eigenvalues of problem \eqref{weak} is contained in $[0,+\infty[$ and consists of a sequence increasing to $+\infty$ and each eigenvalue has finite multiplicity. Moreover the first eigenvalue is $\lambda=0$ and has multiplicity $N+1$, and a set of linearly independent eigenfunctions corresponding to $\lambda=0$ is given by $\lbrace1,x_1,...,x_N\rbrace$.
\end{thm}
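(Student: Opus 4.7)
The plan is to apply the spectral theorem for compact self-adjoint operators to $T_\sigma$ on the Hilbert space $H^2(\Omega)/\mathcal{N}$, equipped with the inner product transported via $\pi^{\sharp}_{\mathcal{N}}$ from the bilinear form on $H^2_{\mathcal{N}}(\Omega)$ that appears on the left-hand side of \eqref{weak}. The discussion preceding the lemma already establishes that this form is an inner product on $H^2_{\mathcal{N}}(\Omega)$ whose induced norm is equivalent to the standard $H^2$-norm (using the Poincar\'e--Wirtinger inequality together with the pointwise estimate $|D^2u|^2\geq (\Delta u)^2/N$), so the transport makes $H^2(\Omega)/\mathcal{N}$ into a Hilbert space.

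With this structure fixed, I would verify three properties of $T_\sigma$. For self-adjointness and non-negativity, I would unwind the composition that defines $T_\sigma$: a direct calculation shows that
$$
\langle T_{\sigma} p[u], p[v]\rangle_{H^2/\mathcal{N}} = \mathcal{P}_{\sigma}\bigl[\pi^{\sharp}_{\mathcal{N}}(T_{\sigma} p[u])\bigr]\bigl[\pi^{\sharp}_{\mathcal{N}} p[v]\bigr] = \mathcal{J}\bigl[\pi_{\mathcal{N}}[u]\bigr]\bigl[\pi_{\mathcal{N}}[v]\bigr] = \int_{\Omega}\pi_{\mathcal{N}}[u]\,\pi_{\mathcal{N}}[v]\,dx,
$$
which is manifestly symmetric in $u,v$ and non-negative when $v=u$. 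For compactness, I would note that $T_\sigma$ is the composition of the homeomorphisms $(\pi^{\sharp}_{\mathcal{N}})^{(-1)}$ and $\pi^{\sharp}_{\mathcal{N}}$, the continuous linear maps $\mathcal{P}_{\sigma}^{(-1)}$ and $\mathcal{J}$, and the embedding $i\colon H^2(\Omega)\hookrightarrow L^2(\Omega)$, which is compact by the Rellich--Kondrachov theorem; hence $T_\sigma$ is compact.

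By the spectral theorem for non-negative compact self-adjoint operators on a Hilbert space, $T_\sigma$ admits a sequence of non-negative eigenvalues of finite multiplicity accumulating only at $0$. The displayed identity also shows that $T_\sigma$ is injective: $T_\sigma p[u]=0$ together with $v=u$ gives $\pi_{\mathcal{N}}[u]=0$, i.e., $u\in\mathcal{N}$ and $p[u]=0$. Consequently all eigenvalues of $T_\sigma$ are strictly positive, and by the preceding lemma they correspond bijectively (with matching multiplicities) to the positive eigenvalues of \eqref{weak} via $\lambda\mapsto 1/\lambda$, yielding the claimed sequence increasing to $+\infty$. It remains to identify the zero eigenspace of \eqref{weak}: taking $\varphi=u$ in \eqref{weak} with $\lambda=0$ gives $\int_{\Omega}(1-\sigma)|D^2u|^2+\sigma(\Delta u)^2\,dx=0$, and since $\sigma\in[0,1[$ both non-negative summands must vanish, forcing $D^2u\equiv 0$; as $\Omega$ is connected, $u$ is affine, i.e., $u\in\mathrm{span}\{1,x_1,\ldots,x_N\}$, and the converse is immediate. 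The only genuinely nontrivial step is the bookkeeping in the self-adjointness calculation above; everything else is a routine application of Rellich--Kondrachov and the spectral theorem.
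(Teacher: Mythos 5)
Your proposal is correct and follows essentially the same route as the paper: the paper's proof is a three-line sketch (self-adjointness is ``easy to prove,'' compactness follows from the compactness of the embedding $i$, the last statement is ``straightforward''), and you have simply supplied the details --- the unwinding of $\langle T_{\sigma}p[u],p[v]\rangle$ to $\int_{\Omega}\pi_{\mathcal{N}}[u]\,\pi_{\mathcal{N}}[v]\,dx$, Rellich--Kondrachov, the spectral theorem, and the identification of the kernel with the affine functions --- all of which check out.
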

\begin{proof}
It is easy to prove that the operator $T_{\sigma}$ is self-adjoint. The compactness of the operator $T_{\sigma}$ follows from the compactness of the embedding $i$. The last statement is straightforward.
\end{proof}

In an analogous way it is possible to show that the eigenvalues of \eqref{Dirichlet} are positive and of finite multiplicity.  In fact, the weak formulation of problem \eqref{Dirichlet} reads: find $(u,\lambda)\in H^2_0(\Omega)\times\mathbb R$ such that $u$ solves equation $\int_{\Omega}\Delta u\Delta\varphi dx=\lambda\int_{\Omega}u\varphi dx$ for all $\varphi\in H^2_0(\Omega)$. We note that this is equivalent to finding $(u,\lambda)\in H^2_0(\Omega)\times\mathbb R$ such that equation \eqref{weak} holds for all $\varphi\in H^2_0(\Omega)$. From the Poincar\'e inequality it follows that the bilinear form given by the left-hand side of \eqref{weak} defines on $H^2_0(\Omega)$ a scalar product whose induced norm is equivalent to the standard one. Therefore the analogous of Theorem \ref{eigenvalues_sigma} holds, hence the eigenvalues of problem \eqref{Dirichlet} are positive and can be represented by means of an infinite sequence diverging to $+\infty$ of the form \eqref{dirichlet_sequence}, and the corresponding eigenfunctions form a orthonormal basis of $H^2_0(\Omega)$.

Finally, we show that problem \eqref{Neumann_bad} admits an infinite sequence of positive eigenvalues. We have already observed that all harmonic functions in $H^2(\Omega)$ are eigenfunctions corresponding to the eigenvalue $\lambda=0$. We start by recalling the following direct decomposition of the space $H^2(\Omega)$ (see \cite[Theorem 4.7]{borovikov} for details):
\begin{equation*}
H^2(\Omega)=H^2_h(\Omega)\oplus\Delta(H^4(\Omega)\cap H^2_0(\Omega)),
\end{equation*}
where $H^2_h(\Omega)=\left\{h\in H^2(\Omega):\Delta h=0\right\}$ is the space of harmonic functions in $H^2(\Omega)$.\\
In order to characterize the positive eigenvalues of problem \eqref{Neumann_bad} and to get rid of the harmonic functions which are the eigenfunctions associated with $\lambda=0$, we will obtain a problem in $\Delta(H^4(\Omega)\cap H^2_0(\Omega))$. Thus we consider the following weak formulation of problem \eqref{Neumann_bad} for $\lambda\ne 0$.
\begin{equation}\label{bilinear4}
\int_{\Omega}\Delta^2u\Delta^2\varphi dx=\lambda\int_{\Omega}\Delta u\Delta\varphi\,,\ \ \ \forall u,\varphi\in H^4(\Omega)\cap H^2_0(\Omega),
\end{equation}
in the unknowns $u\in H^4(\Omega)\cap H^2_0(\Omega)$, $\lambda\in\mathbb R$  (In the case $\lambda=0$, the solutions of \eqref{Neumann_bad} are exactly the harmonic functions in $H^2(\Omega)$). We note that  there exists a constant $C>0$ such that $\int_{\Omega}\Delta^2u\Delta^2\varphi dx\leq C\|u\|_{H^4(\Omega)}\|\varphi\|_{H^4(\Omega)}$ and $\|u\|_{H^4(\Omega)}\leq C\|\Delta^2u\|_{L^2(\Omega)}$ for all $u,\varphi\in H^4(\Omega)\cap H^2_0(\Omega)$ (the second inequality follows from standard elliptic regularity for the Dirichlet problem for the biharmonic operator and from the regularity assumptions of $\Omega$, see \cite[Thm. 2.20]{gazzola} for details). Therefore the bilinear form given by the left-hand side of \eqref{bilinear4} defines on $H^4(\Omega)\cap H^2_0(\Omega)$ a scalar product whose induced norm is equivalent to the standard norm of $H^4(\Omega)$. Thus, the analogue of Theorem \ref{eigenvalues_sigma} holds.
\begin{thm}
The set of eigenvalues of problem \eqref{Neumann_bad} is contained in $[0,+\infty[$. The eigenspace corresponding to the eigenvalue $\lambda=0$ has infinite dimension and all harmonic functions in $H^2(\Omega)$ are eigenfunctions associated with $\lambda=0$. Moreover, the set of positive eigenvalues consists of a sequence increasing to $+\infty$. Each positive eigenvalue has finite multiplicity and the corresponding eigenfunctions form a orthonormal basis of $\Delta(H^4(\Omega)\cap H^2_0(\Omega))$.
\end{thm}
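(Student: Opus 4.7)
The plan is to mirror the construction used in Theorem \ref{eigenvalues_sigma}, but now applied to the auxiliary formulation \eqref{bilinear4} on $V:=H^4(\Omega)\cap H^2_0(\Omega)$, and afterwards to transfer the resulting eigenfunctions onto $\Delta(V)$ via the Laplacian. The $\lambda=0$ part is immediate: any $h\in H^2_h(\Omega)$ satisfies $\Delta^2 h=0$ in $\Omega$ together with $\Delta h=0$ and $\partial_\nu\Delta h=0$ on $\partial\Omega$, so $H^2_h(\Omega)$ sits inside the zero-eigenspace, which is therefore infinite-dimensional.

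For the positive spectrum I would endow $V$ with the scalar product $\langle u,\varphi\rangle_*:=\int_\Omega\Delta^2 u\,\Delta^2\varphi\,dx$, equivalent to the standard $H^4$-norm by the estimates recalled just before the statement. Define $T\colon V\to V$ by Riesz representation through $\langle Tu,\varphi\rangle_*=\int_\Omega\Delta u\,\Delta\varphi\,dx$. Symmetry and non-negativity are obvious; injectivity follows because $Tu=0$ forces $\Delta u\equiv 0$, and a harmonic function in $H^2_0(\Omega)$ must vanish by Dirichlet uniqueness. For compactness, choosing $\varphi=Tu-Tv$ in the defining identity gives
\[
\|Tu-Tv\|_*^{\,2}=\int_\Omega\Delta(u-v)\,\Delta(Tu-Tv)\,dx\leq C\,\|\Delta(u-v)\|_{L^2(\Omega)}\,\|Tu-Tv\|_*,
\]
hence $\|T(u-v)\|_*\leq C\|\Delta(u-v)\|_{L^2(\Omega)}$; since $u\mapsto\Delta u$ sends bounded sets of $V$ into bounded sets of $H^2(\Omega)$, which are relatively compact in $L^2(\Omega)$ by Rellich, $T$ is compact. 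The spectral theorem then yields a sequence of positive eigenvalues $\tau_j\searrow 0$ of finite multiplicity, with $\lambda_j:=\tau_j^{-1}\nearrow+\infty$ the positive eigenvalues of \eqref{bilinear4}, and with eigenfunctions $\{u_j\}\subset V$ forming an $\langle\cdot,\cdot\rangle_*$-orthonormal basis of $V$.

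It remains to link \eqref{bilinear4} with \eqref{Neumann_bad}. Testing \eqref{bilinear4} on $C^\infty_c(\Omega)$ gives $\Delta^4 u=\lambda\Delta^2 u$ in $\Omega$; unrolling the full weak identity via Green's formulas, the surviving boundary contributions force $\Delta^2 u=\partial_\nu\Delta^2 u=0$ on $\partial\Omega$. Combined with $u=\partial_\nu u=0$ on $\partial\Omega$, the function $\Delta^2 u-\lambda u$ then solves a homogeneous Dirichlet biharmonic problem and hence vanishes, yielding $\Delta^2 u=\lambda u$ in $\Omega$. Consequently $v_j:=\Delta u_j\in\Delta(V)$ satisfies $\Delta^2 v_j=\lambda_j v_j$ in $\Omega$ and $\Delta v_j=\lambda_j u_j=0$, $\partial_\nu\Delta v_j=\lambda_j\partial_\nu u_j=0$ on $\partial\Omega$, so $v_j$ is an eigenfunction of \eqref{Neumann_bad}; the injectivity of $u\mapsto\Delta u$ on $V$ transports the orthonormal basis $\{u_j\}$ of $V$ to an orthonormal basis $\{v_j\}$ of $\Delta(V)$. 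The main obstacle I anticipate is precisely this boundary bookkeeping: the eigenfunctions $u_j$ produced by $T$ satisfy Dirichlet rather than Neumann data, and one must verify carefully that it is their Laplacians, not the $u_j$ themselves, that solve \eqref{Neumann_bad}.
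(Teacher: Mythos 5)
Your proposal is correct and follows essentially the same route as the paper, which simply formulates the problem variationally on $H^4(\Omega)\cap H^2_0(\Omega)$ via \eqref{bilinear4}, notes the norm equivalence coming from elliptic regularity, and invokes the analogue of Theorem \ref{eigenvalues_sigma}. You merely make explicit the compact self-adjoint operator, the recovery of the natural boundary conditions $\Delta^2u=\partial_\nu\Delta^2u=0$, and the isometric transfer by $\Delta$ onto $\Delta(H^4(\Omega)\cap H^2_0(\Omega))$ — details the paper leaves implicit (and partly defers to the proof of Theorem \ref{main}).
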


\section{Dependence of the Neumann eigenvalues upon the Poisson's ratio}\label{sec:3}

In the first part of this section we consider the behavior of the eigenvalues of problem \eqref{Neumann_sigma} as $\sigma\rightarrow 1^-$. In the second part, we show that the positive eigenvalues of problem \eqref{Neumann_bad} and the eigenvalues of problem \eqref{Dirichlet} coincide. We start with the following theorem.
\begin{thm}\label{neumann_sigma_0}
For all $j\in\mathbb N$ it holds $\lim_{\sigma\rightarrow 1^-}\lambda_j(\sigma)=0$. Moreover, the function $\lambda_j$ from $[0,1]$ to $\mathbb R$ which maps $\sigma\in[0,1[$ to $\lambda_j(\sigma)$, and extended at $\sigma=1$ by setting $\lambda_j(1)=0$, is Lipschitz continuous on $[0,1]$.
\end{thm}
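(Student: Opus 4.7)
The plan is to prove both assertions in tandem: the Lipschitz bound up to $\sigma = 1$ rests on the a priori decay $\lambda_j(\sigma) = O(1-\sigma)$ as $\sigma \to 1^-$. Throughout I write $a_\sigma(u,v)$ for the bilinear form on the left-hand side of \eqref{weak}.

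\emph{Step 1 (harmonic test functions).} For every $k \in \mathbb{N}$ I would exhibit a $k$-dimensional subspace $V_k \subset H^2_{\mathcal{N}}(\Omega)$ consisting of harmonic functions, obtained by pushing harmonic polynomials on $\mathbb{R}^N$ of degree $\geq 2$ through $\pi_{\mathcal{N}}$. The space of such polynomials is infinite-dimensional for $N \geq 2$; since $\pi_{\mathcal{N}}$ only subtracts a polynomial of degree $\leq 1$ (itself harmonic), it preserves harmonicity and is injective on polynomials of degree $\geq 2$. Setting
$$ C_k := \sup_{u \in V_k \setminus \{0\}} \frac{\int_\Omega |D^2 u|^2 \, dx}{\int_\Omega u^2 \, dx} < \infty $$
and using $\Delta u \equiv 0$ on $V_k$, the min-max principle (recalling that the first $N+1$ eigenvalues vanish) yields
$$ \lambda_{N+1+k}(\sigma) \leq (1-\sigma)\, C_k, $$
so $\lim_{\sigma \to 1^-} \lambda_j(\sigma) = 0$ for every $j$.

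\emph{Step 2 (uniform Hessian control).} Fix $j \geq N+2$, put $k := j - N - 1$, and let $W(\sigma) \subset H^2_{\mathcal{N}}(\Omega)$ be the span of the first $k$ positive eigenfunctions at parameter $\sigma$. For every $u \in W(\sigma)$ with $\|u\|_{L^2} = 1$,
$$ (1-\sigma) \int_\Omega |D^2 u|^2 \, dx \leq a_\sigma(u,u) \leq \lambda_j(\sigma) \leq C_k(1-\sigma), $$
so $\int_\Omega |D^2 u|^2 \, dx \leq C_k$ \emph{uniformly} in $\sigma \in [0,1)$.

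\emph{Step 3 (Lipschitz via min-max perturbation).} Using $W(\sigma_1)$ as a test subspace in the min-max characterization of $\lambda_j(\sigma_2)$,
\begin{align*}
\lambda_j(\sigma_2) - \lambda_j(\sigma_1) &\leq \sup_{u \in W(\sigma_1),\, \|u\|_{L^2} = 1} \bigl[\, a_{\sigma_2}(u,u) - a_{\sigma_1}(u,u) \,\bigr] \\
&= (\sigma_2 - \sigma_1) \sup_u \int_\Omega \bigl(|\Delta u|^2 - |D^2 u|^2\bigr) \, dx \\
&\leq N\, C_k\, |\sigma_2 - \sigma_1|,
\end{align*}
where I used the pointwise inequality $\bigl||\Delta u|^2 - |D^2 u|^2\bigr| \leq N |D^2 u|^2$ (which follows from $|\Delta u|^2 \leq N |D^2 u|^2$ by Cauchy-Schwarz). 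Exchanging the roles of $\sigma_1$ and $\sigma_2$ gives the two-sided bound on $[0,1)$, while the endpoint $\sigma = 1$ is handled directly by the estimate of Step 1. This produces Lipschitz continuity on $[0,1]$ with constant $N C_k$.

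\emph{Main obstacle.} The subtlety is the circularity between Steps 1 and 2: a naive perturbation estimate for $a_\sigma$ in $\sigma$ produces an error proportional to $\int_\Omega |D^2 u|^2 \, dx$, which along the $j$-th eigenbranch a priori admits only the bound $\lambda_j(\sigma)/(1-\sigma)$ and could in principle blow up as $\sigma \to 1^-$. It is precisely the harmonic-polynomial test from Step 1 that forces this Hessian norm to stay uniformly bounded, allowing the Lipschitz estimate to be extended all the way to $\sigma = 1$. Identifying and verifying that a sufficiently large subspace of harmonic functions actually survives inside $H^2_{\mathcal{N}}(\Omega)$ is therefore the key structural point of the proof.
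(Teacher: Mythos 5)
Your argument is correct and rests on the same two pillars as the paper's proof: the test with harmonic functions, which yields the a priori decay $\lambda_j(\sigma)\leq C_j(1-\sigma)$ (the paper's estimates \eqref{est_conv} and \eqref{eq1}), and the observation that the $\sigma$-perturbation of the quadratic form is controlled by $\int_\Omega|D^2u|^2dx$, which on the relevant test spaces is bounded by $\lambda_j(\sigma)/(1-\sigma)\leq C_j$ uniformly in $\sigma$. Where you differ is in how the Lipschitz estimate is organized. The paper works multiplicatively on the Rayleigh quotient: it first derives the local bound \eqref{eq4}, which degenerates as $\min\{\sigma_1,\sigma_2\}\to 1^-$, and then needs a separate refined estimate on $]\frac{1}{2},1[$ (its Step 3) combined with \eqref{eq1} to cancel the $1/(1-\sigma)$ factor near the endpoint. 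You instead test the min-max for $\lambda_j(\sigma_2)$ directly on the span of the first $j$ eigenfunctions at $\sigma_1$, on which the Hessian seminorm is uniformly bounded by Step 2, and obtain a single global Lipschitz constant $NC_k$ on all of $[0,1]$ in one stroke. This removes the paper's local-to-global patching and makes the role of the decay $\lambda_j(\sigma)=O(1-\sigma)$ more transparent; the price is that you must argue on the eigenspaces themselves rather than on arbitrary test functions.

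One technical point to repair in Step 1: the space $V_k\subset H^2_{\mathcal{N}}(\Omega)$ is \emph{not} $L^2(\Omega)$-orthogonal to the kernel $\mathcal{N}$ (membership in $H^2_{\mathcal{N}}(\Omega)$ imposes $\int_\Omega\partial_{x_i}u\,dx=0$, not $\int_\Omega x_iu\,dx=0$), so the shifted min-max giving $\lambda_{N+1+k}(\sigma)\leq(1-\sigma)C_k$ over $k$-dimensional subspaces cannot be invoked with the constant $C_k$ exactly as you defined it. Either test the full characterization \eqref{ray} with the $(N+1+k)$-dimensional space $\mathcal{N}\oplus V_k$, which replaces $C_k$ by another finite constant and changes nothing downstream, or first subtract from each element of $V_k$ its $L^2$-projection onto $\mathcal{N}$, which preserves harmonicity since $\mathcal{N}$ consists of affine functions. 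The analogous point in Step 3 is genuinely unproblematic, because $W(\sigma_1)$ is spanned by eigenfunctions with positive eigenvalues and is therefore $L^2$-orthogonal to $\mathcal{N}$, so $\mathcal{N}\oplus W(\sigma_1)$ is an admissible $j$-dimensional test space on which your uniform Hessian bound applies.
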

\begin{proof}
The proof is divided into three steps. In the first step we prove that $\lim_{\sigma\rightarrow 1^-}\lambda_j(\sigma)=0$ for all $j\in\mathbb N$. In the second step we prove that $\lambda_j(\sigma)$ is locally Lipschitz continuous on $[0,1[$. In the third step we prove that the function $\lambda_j(\sigma)$ extended with continuity at $\sigma=1$ is Lipschitz continuous in a neighborhood of $\sigma=1$.

{\it Step 1.} We recall that for each $\sigma\in[0,1[$ we have the following formula for $\lambda_j(\sigma)$
\begin{equation}\label{ray}
\lambda_j(\sigma)=\inf_{\substack{E\leq H^2(\Omega)\\{\rm dim}E=j}}\sup_{0\ne u\in E}\frac{\int_{\Omega}(1-\sigma)|D^2u|^2+\sigma(\Delta u)^2dx}{\int_{\Omega}u^2dx}.
\end{equation}
We also recall that the space $H^2_h(\Omega)$ is closed in $H^2(\Omega)$ and therefore it is a Hilbert space, endowed with the standard scalar product of $H^2(\Omega)$. Let $\left\{u_i\right\}_{i=1}^{\infty}$ be a set of linearly independent functions in $H^2_h(\Omega)$ such that $\int_{\Omega}u_iu_k=\delta_{ik}$ for all $i,k\in\mathbb N$. Then, from \eqref{ray} we have that for all $j\in\mathbb N$ it holds
\begin{equation*}
\lambda_j(\sigma)\leq\sup_{c_1,...,c_j\in\mathbb R}\frac{(1-\sigma)\int_{\Omega}\left|\sum_{i=1}^j c_iD^2 u_i\right|^2dx}{\int_{\Omega}\left(\sum_{i=1}^jc_iu_i\right)^2dx},
\end{equation*}
where we have chosen as $j$-dimensional space $E$ in \eqref{ray} the space generated by $\left\{u_1,...,u_j\right\}$. Then we have
\begin{multline}\label{est_conv}
\sup_{c_1,...,c_j\in\mathbb R}\frac{(1-\sigma)\int_{\Omega}\left|\sum_{i=1}^j c_iD^2 u_i\right|^2dx}{\int_{\Omega}\left(\sum_{i=1}^jc_iu_i\right)^2dx}\\ \leq\sup_{c_1,...,c_j\in\mathbb R}j(1-\sigma)\frac{\sum_{i=1}^jc_i^2\int_{\Omega}|D^2u_i|^2dx}{\sum_{i=1}^jc_i^2}\\
\leq j(1-\sigma)\max_{i=1,...,j}\int_{\Omega}|D^2u_i|^2dx,
\end{multline}
and therefore 
\begin{equation}\label{limit}
\lim_{\sigma\rightarrow 1^-}\lambda_j(\sigma)=0,
\end{equation}
for all $j\in\mathbb N$.

{\it Step 2.} For each $\sigma_1,\sigma_2\in[0,1[$ and $u\in H^2(\Omega)$ we have
\begin{multline}\label{lip_sigma}
\left|\frac{\int_{\Omega}(1-\sigma_1)|D^2u|^2+\sigma_1(\Delta u)^2dx}{\int_{\Omega}u^2dx}-\frac{\int_{\Omega}(1-\sigma_2)|D^2u|^2+\sigma_2(\Delta u)^2dx}{\int_{\Omega}u^2dx}\right|\\
\leq|\sigma_1-\sigma_2|\frac{\int_{\Omega}|D^2u|^2+(\Delta u)^2dx}{\int_{\Omega}u^2dx}
\leq (1+N)|\sigma_1-\sigma_2|\frac{\int_{\Omega}|D^2u|^2dx}{\int_{\Omega}u^2dx}\\
\leq (1+N)\frac{|\sigma_1-\sigma_2|}{1-\sigma_2}\frac{\int_{\Omega}(1-\sigma_2)|D^2u|^2+\sigma_2(\Delta u)^2dx}{\int_{\Omega}u^2dx}.
\end{multline}
From \eqref{lip_sigma} it follows that
\begin{multline}\label{lip_sigma_2}
\frac{\int_{\Omega}(1-\sigma_2)|D^2u|^2+\sigma_2(\Delta u)^2dx}{\int_{\Omega}u^2dx}\left(1-(1+N)\frac{|\sigma_1-\sigma_2|}{1-\sigma_2}\right)\\
\leq \frac{\int_{\Omega}(1-\sigma_1)|D^2u|^2+\sigma_1(\Delta u)^2dx}{\int_{\Omega}u^2dx}\\
\leq \frac{\int_{\Omega}(1-\sigma_2)|D^2u|^2+\sigma_2(\Delta u)^2dx}{\int_{\Omega}u^2dx}\left(1+(1+N)\frac{|\sigma_1-\sigma_2|}{1-\sigma_2}\right)
\end{multline}
If $\sigma_1,\sigma_2$ satisfy $(1+N)|\sigma_1-\sigma_2|<1-\sigma_2$, then taking the infimum and the supremum in \eqref{lip_sigma_2} yields
\begin{equation*}
|\lambda_j(\sigma_1)-\lambda_j(\sigma_2)|\leq(1+N)\frac{\lambda_j(\sigma_2)}{1-\sigma_2}|\sigma_1-\sigma_2|.
\end{equation*}
By repeating the same arguments above, it is possible to prove that
\begin{equation}\label{eq4}
|\lambda_j(\sigma_1)-\lambda_j(\sigma_2)|\leq(1+N)\frac{\lambda_j(\min\left\{\sigma_1,\sigma_2\right\})}{1-\min\left\{\sigma_1,\sigma_2\right\}}|\sigma_1-\sigma_2|,
\end{equation}
for all $\sigma_1,\sigma_2$ satisfying $(1+N)|\sigma_1-\sigma_2|<1-\min\left\{\sigma_1,\sigma_2\right\}$. Then the function $\lambda_j(\sigma)$ is locally Lipschitz on $[0,1[$. 

We note that from \eqref{eq4} it follows that for all $\varepsilon\in]0,1[$, the function $\lambda_j(\sigma)$ is Lipschitz continuous on $[0,1-\varepsilon]$. Moreover, from \eqref{limit} it follows that the function $\lambda_j(\sigma)$ can be extended with continuity at $\sigma=1$ by setting $\lambda_j(1):=0$. 

{\it Step 3.} Now we prove that the function $\lambda_j(\sigma)$ extended with continuity at $\sigma=1$ is Lipschitz on $[0,1]$. We note that \eqref{eq4} does not allow to prove that $\lambda_j(\sigma)$ is Lipschitz in a neighborhood of $\sigma=1$. We need a refined estimate for $|\lambda(\sigma_1)-\lambda(\sigma_2)|$ near $\sigma=1$. Let $\sigma_1,\sigma_2\in]\frac{1}{2},1[$. By using the same arguments of Step 2, we have that
\begin{multline*}
\left|\frac{\int_{\Omega}(1-\sigma_1)|D^2u|^2+\sigma_1(\Delta u)^2dx}{\int_{\Omega}u^2dx}-\frac{\int_{\Omega}(1-\sigma_2)|D^2u|^2+\sigma_2(\Delta u)^2dx}{\int_{\Omega}u^2dx}\right|\\
\leq|\sigma_1-\sigma_2|\frac{\int_{\Omega}|D^2u|^2+(\Delta u)^2dx}{\int_{\Omega}u^2dx}\leq\frac{|\sigma_1-\sigma_2|}{1-\sigma_i}\frac{\int_{\Omega}(1-\sigma_i)|D^2u|^2+\sigma_i(\Delta u)^2dx}{\int_{\Omega}u^2dx},
\end{multline*}
for $i=1,2$. Hence, from the same arguments of Step 2, we deduce that
\begin{equation*}
|\lambda_j(\sigma_1)-\lambda_j(\sigma_2)|\leq\frac{\lambda_j(\sigma_i)}{1-\sigma_i}|\sigma_1-\sigma_2|,
\end{equation*}
for all $\sigma_1,\sigma_2\in]\frac{1}{2},1[$ with $|\sigma_1-\sigma_2|<1-\sigma_i$, for $i=1,2$. In particular, we note that if $\sigma_{i_1}>\sigma_{i_2}$, then $|\sigma_1-\sigma_2|<1-\sigma_{i_2}$. Therefore 
\begin{equation}\label{estimate2}
|\lambda_j(\sigma_{1})-\lambda_j(\sigma_{2})|\leq\frac{\lambda_j(\min\left\{\sigma_1,\sigma_2\right\})}{1-\min\left\{\sigma_1,\sigma_2\right\}}|\sigma_1-\sigma_2|,
\end{equation}
for all $\sigma_1,\sigma_2\in]\frac{1}{2},1[$. Moreover, from \eqref{est_conv}, it follows that there exists a constant $C_j$ which does not depend on $\sigma$, such that
\begin{equation}\label{eq1}
\lambda_j(\sigma)\leq C_j (1-\sigma),
\end{equation}
for all $\sigma\in[0,1]$. From \eqref{estimate2} and \eqref{eq1} it follows that
\begin{equation*}
|\lambda_j(\sigma_1)-\lambda_j(\sigma_2)|\leq C_j |\sigma_1-\sigma_2|,
\end{equation*}
for all $\sigma_1,\sigma_2\in]\frac{1}{2},1]$. Then $\lambda_j(\sigma)$ is Lipschitz in a neighborhood of $\sigma=1$, hence it is Lipschitz on $[0,1]$. This concludes the proof of the theorem.
\end{proof}






Thus, the positive eigenvalues of problem \eqref{Neumann_bad} are not limiting points for the eigenvalues of problem \eqref{Neumann_sigma} as $\sigma\rightarrow 1^-$.

Now we consider problems \eqref{Neumann_bad} and \eqref{Dirichlet}. We note that, under the assumptions that $\Omega$ is of class $C^{4,\alpha}$, we have that the eigenfunctions $w$ of problem \eqref{Dirichlet} are of class $C^{4,\alpha}(\overline\Omega)$ (see \cite[Thm. 2.20]{gazzola}). We have the following theorem.
\begin{thm}\label{main}
All the positive eigenvalues of problem \eqref{Neumann_bad} coincide with the eigenvalues of problem \eqref{Dirichlet}.
\end{thm}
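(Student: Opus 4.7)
The plan is to use the reformulation \eqref{bilinear4} of the positive eigenvalues of \eqref{Neumann_bad} and to show that, as pairs in $\mathbb{R}_+\times(H^4(\Omega)\cap H^2_0(\Omega))$, the solutions of \eqref{bilinear4} coincide with the Dirichlet eigenpairs of \eqref{Dirichlet}. Since the previous section has already identified the positive eigenvalues of \eqref{Neumann_bad} with the $\lambda$'s arising in \eqref{bilinear4}, this identification will force $\lambda_j=\mu_j$ for every $j\in\mathbb{N}$.

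The first direction, Dirichlet to \eqref{bilinear4}, would be a direct substitution. A Dirichlet eigenfunction $w$ belongs to $H^4(\Omega)\cap H^2_0(\Omega)$ by the $C^{4,\alpha}$ regularity recalled just before the theorem and satisfies $\Delta^2 w=\mu w$ classically. For any $\varphi\in H^4(\Omega)\cap H^2_0(\Omega)$, replacing $\Delta^2 w$ by $\mu w$ in $\int_\Omega\Delta^2 w\,\Delta^2\varphi\,dx$ and then integrating by parts twice (legitimate since $w,\varphi\in H^2_0(\Omega)$) rewrites the integral as $\mu\int_\Omega \Delta w\,\Delta\varphi\,dx$, so $(\mu,w)$ solves \eqref{bilinear4}.

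The second direction, \eqref{bilinear4} back to Dirichlet, is the main point. Given $(\lambda,u)$ solving \eqref{bilinear4}, the goal is to show $\Delta^2u=\lambda u$ in $L^2(\Omega)$. The tool I would invoke is well-posedness of the Dirichlet biharmonic problem $\Delta^2\psi=\varphi$ with $\psi=\partial_\nu\psi=0$, which under the assumed $C^{4,\alpha}$ regularity yields, for every $\varphi\in L^2(\Omega)$, a unique solution $\psi\in H^4(\Omega)\cap H^2_0(\Omega)$ (this is the same regularity statement \cite[Thm.~2.20]{gazzola} already used in the paper). Testing \eqref{bilinear4} against this $\psi$ produces $\int_\Omega \Delta^2 u\,\varphi\,dx=\lambda\int_\Omega \Delta u\,\Delta\psi\,dx$, and since $u,\psi\in H^2_0(\Omega)$ a double integration by parts reduces the right-hand side to $\lambda\int_\Omega u\,\varphi\,dx$. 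Hence $\int_\Omega(\Delta^2u-\lambda u)\,\varphi\,dx=0$ for every $\varphi\in L^2(\Omega)$, which forces $\Delta^2u=\lambda u$ a.e.; combined with $u\in H^2_0(\Omega)$, this says $(\lambda,u)$ is a Dirichlet eigenpair.

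The most delicate point is the surjectivity step in the second direction: turning an identity that holds only for test functions of the special form $\Delta^2\psi$ into one valid for arbitrary $\varphi\in L^2(\Omega)$. This is exactly where the biharmonic Dirichlet solvability on a $C^{4,\alpha}$ domain is essential. Once both directions are in place the Dirichlet eigenspace and the \eqref{bilinear4} eigenspace at any positive value $\lambda$ coincide as subspaces of $H^4(\Omega)\cap H^2_0(\Omega)$, so the ordered sequences $\{\mu_j\}$ and $\{\lambda_j\}$ agree term by term with their multiplicities, and the corresponding Neumann eigenfunction of \eqref{Neumann_bad} is simply $\Delta w$ where $w$ is the Dirichlet eigenfunction.
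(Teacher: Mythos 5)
Your proof is correct, but it follows a genuinely different route from the paper's. You work entirely at the level of the variational formulation \eqref{bilinear4} and show that its solution set in $\mathbb R_+\times\bigl((H^4(\Omega)\cap H^2_0(\Omega))\setminus\{0\}\bigr)$ is exactly the set of Dirichlet eigenpairs of \eqref{Dirichlet}: one direction is a double integration by parts (valid because the test functions lie in $H^2_0(\Omega)$), and the other rests on the surjectivity of $\Delta^2$ from $H^4(\Omega)\cap H^2_0(\Omega)$ onto $L^2(\Omega)$, i.e.\ the well-posedness and $H^4$-regularity of the Dirichlet biharmonic problem on a $C^{4,\alpha}$ domain -- the same input the paper already uses to set up \eqref{bilinear4}. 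The paper instead argues in the strong form: given a Dirichlet eigenpair $(\mu,w)$ it solves the second-order problem $\Delta v_0=w$, $v_0=0$ on $\partial\Omega$, and adds the harmonic correction $h=\Delta w/\mu-v_0$ to produce the explicit Neumann eigenfunction $v=\Delta w/\mu$ of \eqref{Neumann_bad}; conversely it maps a Neumann eigenfunction $u$ to $\Delta u\in H^2_0(\Omega)$. What each approach buys: the paper's construction exhibits the eigenfunction correspondence explicitly and only needs solvability of a Poisson problem rather than of the full biharmonic Dirichlet problem, while your argument stays consistent with how the positive spectrum of \eqref{Neumann_bad} was actually defined (via \eqref{bilinear4}), and -- since the map $u\mapsto\Delta u$ is injective on $H^2_0(\Omega)$ -- it makes the equality of multiplicities, hence $\lambda_j=\mu_j$ term by term, completely explicit, a point the paper's proof leaves implicit.
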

\begin{proof}
Let $\mu$ be an eigenvalue of problem \eqref{Dirichlet} and let $w\in H^2_0(\Omega)$ be an eigenfunction associated with $\mu$ . Let $v_0\in H^2(\Omega)\cap H^1_0(\Omega)$ be the unique solution of
\begin{equation*}
\begin{cases}
\Delta v_0=w, & {\rm in}\ \Omega,\\
v_0=0, & {\rm on}\ \partial\Omega.
\end{cases}
\end{equation*}
We set $v_h=v_0+h$ for some harmonic function $h$. Now we consider the following problem: find a harmonic function $h$ such that
\begin{equation*}
\begin{cases}
\Delta^2 v_h=\mu v_h, & {\rm in}\ \Omega,\\
\Delta v_h=0, & {\rm on}\ \partial\Omega,\\
\frac{\partial\Delta v_h}{\partial\nu}=0,&{\rm on}\ \partial\Omega.
\end{cases}
\end{equation*}
Clearly $\Delta {v_h}_{|_{\partial\Omega}}=\frac{\partial\Delta v_h}{\partial\nu}_{|_{\partial\Omega}}=0$ for all harmonic functions $h$. As for the differential equation, we have $\Delta^2(v_0+h)=\mu (v_0+h)$ if and only if $\Delta(\Delta v_0+\Delta h)=\mu (v_0+h)$, that is $\Delta w=\mu(v_0+h)$ and therefore $h=\frac{\Delta w}{\mu}-v_0$, which is clearly harmonic and belongs to $H^2(\Omega)$. Therefore each eigenvalue $\mu$ of problem \eqref{Dirichlet} is an eigenvalue of problem \eqref{Neumann_bad} and a corresponding eigenfunction is given by $v=\frac{\Delta w}{\mu}$. On the other hand, suppose that $\lambda>0$ is an eigenvalue of problem \eqref{Neumann_bad} and let $u\in \Delta(H^4(\Omega)\cap H^2_0(\Omega))$ be a corresponding eigenfunction. Then, the function $w=\Delta u$ is in $H^2_0(\Omega)$ and solves
\begin{equation*}
\begin{cases}
\Delta^2 w=\lambda w, & {\rm in}\ \Omega,\\
w=0, & {\rm on}\ \partial\Omega,\\
\frac{\partial w}{\partial\nu}=0,&{\rm on}\ \partial\Omega,
\end{cases}
\end{equation*}
therefore, $\lambda$ is an eigenvalue of problem \eqref{Dirichlet} with corresponding eigenfunction $\Delta u$.
\end{proof}

\section{Neumann and Dirichlet eigenvalues in the case of the unit ball}\label{sec:4}
In this section we consider problems \eqref{Neumann_sigma}, \eqref{Neumann_bad} and \eqref{Dirichlet} when $\Omega=B$ is the unit ball in $\mathbb R^N$ centered at zero. In this case it is possible to perform explicit computations which allow to recast the eigenvalue problems \eqref{Neumann_sigma}, \eqref{Neumann_bad}  and \eqref{Dirichlet} into suitable equations of the form $F(\lambda)=0$ and then gather informations on the behavior of the eigenvalues.

It is convenient to use the standard spherical coordinates $(r,\theta)\in \mathbb R_+\times\partial B$ in $\mathbb R^N$. We refer e.g., to \cite{mazyabook} for more details on spherical coordinates in $\mathbb R^N$. We denote by $\Delta_S$ the Laplace-Beltrami operator on the unit sphere $\partial B$ of $\mathbb R^N$. We denote by $H_l(\theta)$ a spherical harmonic of order $l\in\mathbb N_0$, where $\mathbb N_0=\mathbb N\cup\left\{0\right\}$. We recall that for all $l\in\mathbb N_0$, $H_l$ is a solution of the equation $-\Delta_S H_l=l(l+N-2)H_l$.\\
As customary, for $l\in\mathbb N_0$, we denote by $j_l$ and $i_l$ the ultraspherical and modified ultraspherical Bessel functions of the first species and order $l$ respectively, which are defined by
$$
j_l(z)=z^{1-\frac{N}{2}}J_{\frac{N}{2}-1+l}(z)\,,\ \ \ i_l(z)=z^{1-\frac{N}{2}}I_{\frac{N}{2}-1+l}(z),
$$
where $J_{\nu}(z)$ and $I_{\nu}(z)$ are the Bessel and modified Bessel functions of the first species and order $\nu$ respectively (see \cite[\S 9]{abram} for details).

We consider first problem \eqref{Dirichlet} on $B$. For the convenience of the reader we recall a result from \cite{chasman}.
\begin{lem}\label{Dirichlet_eigenfunctions}
Given an eigenvalue $\mu$ of problem \eqref{Dirichlet} on $B$, a corresponding eigenfunction $w$ is of the form $w(r,\theta)=W_l(r)H_l(\theta)$, for some $l\in\mathbb N_0$, where
\begin{equation}\label{G}
W_l(r)=\alpha j_l(\sqrt[4]{\mu}r)+\beta i_l(\sqrt[4]{\mu}r),
\end{equation}
for suitable $\alpha,\beta\in\mathbb R$.
\end{lem}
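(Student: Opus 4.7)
The plan is to use separation of variables in spherical coordinates together with the factorization $\Delta^2 - \mu = (\Delta - \sqrt{\mu})(\Delta + \sqrt{\mu})$, exploiting the rotational symmetry of the ball $B$.

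First I would invoke the $O(N)$-invariance of the problem to decompose an eigenfunction $w$ associated with $\mu$ along spherical harmonics, writing (in the sense of $L^2(B)$ or by passing to irreducible components)
\begin{equation*}
w(r,\theta) = \sum_{l=0}^{\infty} \sum_k R_{l,k}(r) H_{l,k}(\theta),
\end{equation*}
where $\{H_{l,k}\}_k$ is an orthonormal basis of spherical harmonics of order $l$. Since $\Delta^2$ commutes with rotations and each eigenspace decomposes into a direct sum indexed by the pair $(l,k)$, it suffices to prove the claim for eigenfunctions of the form $w(r,\theta) = R(r) H_l(\theta)$ for some fixed $l \in \mathbb{N}_0$ and some spherical harmonic $H_l$.

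Next I would use the standard decomposition of the Laplacian in spherical coordinates,
\begin{equation*}
\Delta = \partial_r^2 + \frac{N-1}{r}\partial_r + \frac{1}{r^2}\Delta_S,
\end{equation*}
together with $-\Delta_S H_l = l(l+N-2) H_l$ to reduce $\Delta w = \mu w$ (and eventually $\Delta^2 w = \mu w$) to an ODE in $R$. Factoring $\Delta^2 - \mu = (\Delta - \sqrt{\mu})(\Delta + \sqrt{\mu})$, I would observe that solutions of $\Delta^2 w = \mu w$ are spanned by solutions of $\Delta u = \sqrt{\mu}\, u$ and $\Delta u = -\sqrt{\mu}\, u$. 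Substituting $u = R(r) H_l(\theta)$, each of these produces a radial ODE that, after the change of variable $z = \sqrt[4]{\mu}\, r$ and multiplication by $z^{N/2-1}$, becomes, respectively, Bessel's equation and the modified Bessel's equation of order $\frac{N}{2}-1+l$. Thus their general solutions are
\begin{equation*}
R(r) = a\, j_l(\sqrt[4]{\mu}\, r) + b\, y_l(\sqrt[4]{\mu}\, r), \qquad R(r) = c\, i_l(\sqrt[4]{\mu}\, r) + d\, k_l(\sqrt[4]{\mu}\, r),
\end{equation*}
where $y_l$ and $k_l$ denote the (ultraspherical) Bessel functions of the second species.

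The remaining step, which is the main technical point, is to discard the second-species contributions. Since $w \in H_0^2(B) \subset H^2(B)$ and the Dirichlet eigenfunctions are in fact of class $C^{4,\alpha}(\overline{B})$ (as already noted in the paper via \cite[Thm.~2.20]{gazzola}), the radial profile $R(r)$ must remain bounded as $r \to 0^+$. The functions $y_l(\sqrt[4]{\mu}\, r)$ and $k_l(\sqrt[4]{\mu}\, r)$ are singular at the origin (with the known logarithmic or power-type blow-up depending on the parity of $N$ and the value of $l$), hence the coefficients multiplying them must vanish. Combining the two contributions, $R(r) = W_l(r) = \alpha\, j_l(\sqrt[4]{\mu}\, r) + \beta\, i_l(\sqrt[4]{\mu}\, r)$, which is the claimed form. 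The Dirichlet boundary conditions $w = \partial w/\partial\nu = 0$ on $\partial B$ are not needed to establish the form of $W_l$; they only serve to determine the admissible values of $\mu$ and the ratio $\beta/\alpha$.
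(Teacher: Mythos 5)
The paper does not actually prove this lemma: it is recalled from \cite{chasman}, so there is no internal proof to compare against. Your argument is the standard one that presumably underlies the cited result --- decomposition along spherical harmonics by rotational invariance, the factorization $\Delta^2-\mu=(\Delta-\sqrt{\mu})(\Delta+\sqrt{\mu})$ (legitimate since $\mu>0$), reduction to the Bessel and modified Bessel equations of order $\tfrac{N}{2}-1+l$, and elimination of the second-species solutions by regularity at the origin --- and it is essentially correct. One step deserves slightly more care: you discard $y_l$ and $k_l$ on the grounds that each is singular at $r=0$, but a priori a nontrivial combination $b\,y_l(\sqrt[4]{\mu}\,r)+d\,k_l(\sqrt[4]{\mu}\,r)$ could still be regular, since both functions have the same leading behaviour $\sim r^{2-N-l}$ (respectively $\log r$ when $N=2$, $l=0$), so the leading singularities could in principle cancel. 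This is easily excluded: if $v=\bigl(b\,y_l+d\,k_l\bigr)(\sqrt[4]{\mu}\,r)\,H_l(\theta)$ were smooth near the origin, then so would be $(\Delta-\sqrt{\mu})v=-2\sqrt{\mu}\,b\,y_l(\sqrt[4]{\mu}\,r)H_l(\theta)$, forcing $b=0$ and then $d=0$. A cleaner variant of your own factorization avoids the issue altogether: set $w_{\pm}=\tfrac12\bigl(w\pm\mu^{-1/2}\Delta w\bigr)$, which are smooth in $B$ by interior elliptic regularity and satisfy $\Delta w_{\pm}=\pm\sqrt{\mu}\,w_{\pm}$, so the second-species functions never enter. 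With that one-line addition your proof is complete.
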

We establish now an implicit characterization of the eigenvalues of \eqref{Dirichlet} on $B$.
\begin{lem}
The eigenvalues $\mu$ of problem \eqref{Dirichlet} on $B$ are given implicitly as zeroes of the equation
\begin{equation}\label{det_dirichlet}
j_l(\sqrt[4]{\mu})i_l'(\sqrt[4]{\mu})-i_l(\sqrt[4]{\mu})j_l'(\sqrt[4]{\mu})=0.
\end{equation}
\end{lem}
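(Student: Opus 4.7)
The plan is to deduce the characterization directly from Lemma \ref{Dirichlet_eigenfunctions} by imposing the Dirichlet boundary conditions on the explicit radial profile $W_l(r)=\alpha j_l(\sqrt[4]{\mu}r)+\beta i_l(\sqrt[4]{\mu}r)$ and reading off the compatibility condition for a nontrivial pair $(\alpha,\beta)$.

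First I would fix an eigenvalue $\mu>0$ and a corresponding eigenfunction $w(r,\theta)=W_l(r)H_l(\theta)$ of the form given by Lemma \ref{Dirichlet_eigenfunctions}. Since $\partial B$ is the unit sphere and the outer unit normal coincides with $\partial/\partial r$ at $r=1$, the Dirichlet boundary conditions $w=0$ and $\partial w/\partial\nu=0$ on $\partial B$ translate (using that $H_l\not\equiv 0$) into the two scalar conditions $W_l(1)=0$ and $W_l'(1)=0$. Computing $W_l'$ by the chain rule I obtain the $2\times 2$ linear system
\begin{equation*}
\begin{pmatrix} j_l(\sqrt[4]{\mu}) & i_l(\sqrt[4]{\mu}) \\ \sqrt[4]{\mu}\,j_l'(\sqrt[4]{\mu}) & \sqrt[4]{\mu}\,i_l'(\sqrt[4]{\mu}) \end{pmatrix}\begin{pmatrix} \alpha \\ \beta \end{pmatrix}=\begin{pmatrix} 0 \\ 0 \end{pmatrix}.
\end{equation*}
A nontrivial solution $(\alpha,\beta)$ exists if and only if the determinant vanishes, and since $\mu>0$ implies $\sqrt[4]{\mu}>0$ the factor $\sqrt[4]{\mu}$ may be cancelled, yielding exactly \eqref{det_dirichlet}.

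Conversely, I would argue that if $\mu>0$ satisfies \eqref{det_dirichlet} for some $l\in\mathbb N_0$, then the above homogeneous system has a nontrivial solution $(\alpha,\beta)$; choosing any nonzero spherical harmonic $H_l$, the function $w(r,\theta)=W_l(r)H_l(\theta)$ with $W_l$ as in \eqref{G} satisfies the biharmonic equation $\Delta^2 w=\mu w$ in $B$ (by the very choice of $j_l$ and $i_l$ as the regular radial profiles of $\Delta^2-\mu$ when applied to $H_l$) and the two Dirichlet boundary conditions; it is nontrivial because $(\alpha,\beta)\neq (0,0)$ and $j_l$, $i_l$ are linearly independent. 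Hence $\mu$ is a Dirichlet eigenvalue.

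I do not expect a serious obstacle here: once Lemma \ref{Dirichlet_eigenfunctions} supplies the separated form, everything reduces to a linear algebra computation. The only point worth flagging is the exclusion of the singular radial solutions (the ultraspherical analogues of $Y_l$ and $K_l$), which is implicit in the statement of Lemma \ref{Dirichlet_eigenfunctions}: they are ruled out because the eigenfunction must lie in $H^2(B)$ and therefore be regular at the origin. This regularity requirement is what reduces the four-dimensional space of radial solutions of $(\Delta^2-\mu)[W_l(r)H_l(\theta)]=0$ to the two-dimensional span of $j_l$ and $i_l$, and thus makes the vanishing of a single $2\times 2$ determinant the correct compatibility condition.
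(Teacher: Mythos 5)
Your proof is correct and follows essentially the same route as the paper: impose the Dirichlet conditions $W_l(1)=W_l'(1)=0$ on the separated form from Lemma \ref{Dirichlet_eigenfunctions}, obtain a homogeneous $2\times 2$ system in $(\alpha,\beta)$, and require its determinant to vanish. You in fact supply a bit more detail than the paper (the explicit matrix, the cancellation of $\sqrt[4]{\mu}$, the converse implication, and the remark on excluding the singular radial solutions), all of which is consistent with the intended argument.
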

\begin{proof}
By Lemma \ref{Dirichlet_eigenfunctions}, an eigenfunction $w$ associated with an eigenvalue $\mu$ is of the form $w(r,\theta)=W_l(r)H_l(\theta)$, where $W_l(r)$ is given by \eqref{G}. We recall that in spherical coordinates the Dirichlet boundary conditions are written as
\begin{equation*}
w_{|_{r=1}}=\partial_r w_{|_{r=1}}=0. 
\end{equation*}
By imposing boundary conditions to $w(r,\theta)$ we obtain a homogeneous system of two equations in two unknowns $\alpha$ and $\beta$ which has solutions if and only if its determinant vanishes. This yields formula \eqref{det_dirichlet}.
\end{proof}

Now we consider problem \eqref{Neumann_sigma} on $B$. For the convenience of the reader we recall the following result from \cite{chasman}.
\begin{lem}\label{Neumann_sigma_eigenfunctions}
Given an eigenvalue $\lambda$ of problem \eqref{Neumann_sigma} with $\sigma\in[0,1]$ on $B$, a corresponding eigenfunction $u$ is of the form $u(r,\theta)=U_l(r)H_l(\theta)$, for some $l\in\mathbb N_0$, where 
\begin{equation}\label{U}
U_l(r)=\alpha j_l(\sqrt[4]{\lambda}r)+\beta i_l(\sqrt[4]{\lambda}r),
\end{equation}
for $\alpha,\beta\in\mathbb R$.
\end{lem}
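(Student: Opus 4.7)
The plan is to combine the rotational symmetry of $B$ with separation of variables. First I would observe that the operator $\Delta^2$ and the Neumann boundary conditions in \eqref{Neumann_sigma} all commute with the natural action of $O(N)$ on functions on $B$, so the eigenspace associated with $\lambda$ is $O(N)$-invariant and decomposes as a direct sum of isotypic components indexed by $l\in\mathbb N_0$, where the $l$-th component consists of functions of the form $U(r)H_l(\theta)$ with $H_l$ a spherical harmonic of degree $l$. A basis of the eigenspace can therefore be chosen inside these components, so it suffices to prove the claim for eigenfunctions $u(r,\theta)=U(r)H_l(\theta)$ of product form.

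Next I would substitute $u=U(r)H_l(\theta)$ into $\Delta^2 u=\lambda u$, using the spherical decomposition of the Laplacian together with the identity $-\Delta_S H_l=l(l+N-2)H_l$. A short computation shows that $\Delta(U(r)H_l(\theta))=(L_l U)(r)H_l(\theta)$ where
$$L_l U:=U''+\frac{N-1}{r}U'-\frac{l(l+N-2)}{r^2}U,$$
so that the eigenvalue equation reduces to the fourth-order radial ODE $L_l^2 U=\lambda U$ on $(0,1)$. Since eigenvalues of \eqref{Neumann_sigma} are non-negative by Theorem \ref{eigenvalues_sigma}, this factors as $(L_l-\sqrt{\lambda})(L_l+\sqrt{\lambda})U=0$, so the general solution is $U=V+W$ with $L_lV=\sqrt{\lambda}\,V$ and $L_lW=-\sqrt{\lambda}\,W$.

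Then I would set $z=\sqrt[4]{\lambda}\,r$ and recognize the two second-order equations as the ultraspherical Bessel equation and its modified counterpart: the solutions that are regular at the origin are $j_l(z)$ and $i_l(z)$ respectively, while the other independent solutions (of $y_l$ and $k_l$ type) are singular at $r=0$. Since an eigenfunction $u$ of \eqref{Neumann_sigma} belongs to $H^2(B)$, its radial profile must be regular at the origin, so the singular solutions are discarded and one obtains $U_l(r)=\alpha j_l(\sqrt[4]{\lambda}\,r)+\beta i_l(\sqrt[4]{\lambda}\,r)$.

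The main obstacle is the first step: justifying that one may pick a basis of the eigenspace consisting of single products $U(r)H_l(\theta)$, rather than finite superpositions with several distinct values of $l$. This is not given by bare separation of variables and requires the representation-theoretic observation that each $O(N)$-isotypic component of the eigenspace is exactly the tensor product of the space of spherical harmonics of a fixed degree with a radial factor. Some care is also needed to keep the signs straight in the factorization of $L_l^2-\lambda$, so that it is $j_l$ and $i_l$, and not their singular counterparts, that appear in the final formula.
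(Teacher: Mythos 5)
Your argument is correct, and it is essentially the standard separation-of-variables proof: reduction to the isotypic components $U(r)H_l(\theta)$, the radial equation $L_l^2U=\lambda U$, the factorization $(L_l-\sqrt{\lambda})(L_l+\sqrt{\lambda})U=0$, and the exclusion of the singular Bessel solutions by regularity at the origin. Note that the paper does not prove this lemma at all --- it is recalled from the cited reference \cite{chasman} --- so your write-up is in fact more self-contained than the paper; the one point you rightly flag (that a basis of each eigenspace can be taken of pure product form, which follows from Schur's lemma applied to the $O(N)$-isotypic decomposition, or more elementarily by projecting the equation and the boundary conditions onto each spherical-harmonic mode) is exactly the step that must be supplied and is handled correctly.
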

We have the following lemma on the eigenvalues of problem \eqref{Neumann_sigma} on $B$.
\begin{lem}
The eigenvalues $\lambda$ of problem \eqref{Neumann_sigma} with $\sigma\in[0,1]$ on $B$ are given implicitly as zeroes of the equation
\begin{equation}\label{det_sigma_equation}
{\rm det}M(\lambda,\sigma)=0,
\end{equation}
where $M(\lambda,\sigma)$ is the $2\times 2$ matrix defined by
\scriptsize
\begin{equation}\label{sigma_matrix}
\begin{bmatrix}
\sqrt{\lambda}j_l''(\sqrt[4]{\lambda}) +(N-1)\sqrt[4]{\lambda}\sigma j_l'(\sqrt[4]{\lambda}) & &  \sqrt{\lambda}i_l''(\sqrt[4]{\lambda})+(N-1)\sqrt[4]{\lambda}\sigma i_l'(\sqrt[4]{\lambda})\\
-l(l+N-2)\sigma j_l(\sqrt[4]{\lambda})& & -l(l+N-2)\sigma i_l(\sqrt[4]{\lambda})\\
  &  & \\
\sqrt[4]{\lambda^3}j_l'''(\sqrt[4]{\lambda})+(N-1)\sqrt{\lambda}j_l''(\sqrt[4]{\lambda})& & \sqrt[4]{\lambda^3}i_l'''(\sqrt[4]{\lambda})+(N-1)\sqrt{\lambda}i_l''(\sqrt[4]{\lambda})\\
+\sqrt[4]{\lambda}\left(1-N+l(\sigma-2)(N+l-2)\right)j_l'(\sqrt[4]{\lambda})& &+\sqrt[4]{\lambda}\left(1-N+l(\sigma-2)(N+l-2)\right)i_l'(\sqrt[4]{\lambda})\\
-l(l+N-2)(\sigma-3)j_l(\sqrt[4]{\lambda})& &-l(l+N-2)(\sigma-3)i_l(\sqrt[4]{\lambda})
\end{bmatrix}
\end{equation}
\normalsize
\end{lem}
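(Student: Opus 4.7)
The proof plan is to substitute the form of the eigenfunction given by Lemma \ref{Neumann_sigma_eigenfunctions} into the two Neumann boundary conditions of \eqref{Neumann_sigma} evaluated on the unit sphere and to translate these conditions into a homogeneous $2\times 2$ linear system in the coefficients $\alpha$ and $\beta$ of $U_l(r)=\alpha j_l(\sqrt[4]{\lambda}r)+\beta i_l(\sqrt[4]{\lambda}r)$. The existence of a non-trivial $(\alpha,\beta)$ is then equivalent to the vanishing of the determinant of the coefficient matrix, which we will identify with $M(\lambda,\sigma)$.

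Concretely, I would first write the Laplacian in spherical coordinates applied to $u(r,\theta)=U_l(r)H_l(\theta)$, namely
\[
\Delta u=U_l''(r)H_l+\tfrac{N-1}{r}U_l'(r)H_l-\tfrac{l(l+N-2)}{r^2}U_l(r)H_l,
\]
using $-\Delta_S H_l=l(l+N-2)H_l$. Since on $\partial B$ the outer normal is $\nu=\partial_r$, we have $\partial^2 u/\partial\nu^2|_{r=1}=U_l''(1)H_l$, so the first boundary condition $(1-\sigma)\partial^2_\nu u+\sigma\Delta u=0$ at $r=1$ becomes, after cancelling $H_l$,
\[
U_l''(1)+\sigma(N-1)U_l'(1)-\sigma l(l+N-2)U_l(1)=0.
\]
Using the chain rule with $k=\sqrt[4]{\lambda}$, i.e. $U_l^{(m)}(1)=k^{m}[\alpha j_l^{(m)}(k)+\beta i_l^{(m)}(k)]$, the equation splits into an $\alpha$-coefficient and a $\beta$-coefficient; these are exactly the two entries of the first row of $M(\lambda,\sigma)$ in \eqref{sigma_matrix}.

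For the second boundary condition I would differentiate the expression for $\Delta u$ in $r$ to obtain $\partial_r\Delta u$ at $r=1$, and compute the tangential divergence term $\operatorname{div}_{\partial B}(D^2u\cdot\nu)_{\partial B}$ on the sphere, using that on $\partial B$ one has $\nu_j=x_j$, so $(D^2 u\cdot\nu)_i=\sum_j u_{ij}x_j$, and the tangential divergence can be written through radial and spherical derivatives. Plugging in $u=U_l(r)H_l(\theta)$ and again using $\Delta_S H_l=-l(l+N-2)H_l$ produces, after some manipulation and collecting the $\alpha$- and $\beta$-contributions with the factors $k,k^2,k^3$ suitably distributed, the two entries of the second row of $M(\lambda,\sigma)$. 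With both boundary conditions expressed as the linear system $M(\lambda,\sigma)(\alpha,\beta)^t=0$, a nontrivial eigenfunction of angular mode $l$ exists if and only if $\det M(\lambda,\sigma)=0$, which is \eqref{det_sigma_equation}.

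The routine but heaviest part of the argument is the second step: expressing $(1-\sigma)\operatorname{div}_{\partial B}(D^2u\cdot\nu)_{\partial B}$ on the sphere in the separated form $U_l(r)H_l(\theta)$ and combining it with $\partial_r\Delta u$. This is the main obstacle because it requires the explicit spherical-coordinate form of the tangential divergence of a normal-trace of the Hessian, and the correct bookkeeping of the terms proportional to $U_l(1), U_l'(1), U_l''(1), U_l'''(1)$ so that they match precisely the factors $-l(l+N-2)(\sigma-3)$, $1-N+l(\sigma-2)(N+l-2)$, $N-1$ and $1$ appearing in the second row of \eqref{sigma_matrix}. Once this identification is done, the conclusion $\det M(\lambda,\sigma)=0$ is immediate from the nontriviality of $(\alpha,\beta)$.
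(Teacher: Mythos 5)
Your proposal is correct and follows essentially the same route as the paper: substitute the separated form $u=U_l(r)H_l(\theta)$ into the two boundary conditions at $r=1$, reduce to a homogeneous $2\times2$ system in $(\alpha,\beta)$, and require the determinant to vanish. The only difference is that the paper shortcuts your ``heaviest part'' by quoting from \cite{chasman} the ready-made spherical-coordinate form of the second boundary condition, namely $\partial_r(\Delta u)+(1-\sigma)r^{-2}\Delta_S(\partial_r u-u/r)=0$ at $r=1$, rather than recomputing the tangential divergence of $D^2u\cdot\nu$ from scratch.
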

\begin{proof}
By Lemma \ref{Neumann_sigma_eigenfunctions}, an eigenfunction $u$ associated with an eigenvalue $\lambda$ is of the form $u(r,\theta)=U_l(r)H_l(\theta)$, where $U_l(r)$ is given by \eqref{U}. We recall that in spherical coordinates the Neumann boundary conditions are written as
\begin{equation*}
\begin{cases}
(1-\sigma)\partial^2_{rr}u+\sigma\Delta u_{|_{r=1}}=0,\\
\partial_r(\Delta u)+(1-\sigma)\frac{1}{r^2}\Delta_S\left(\partial_r u-\frac{u}{r}\right)_{|_{r=1}}=0,
\end{cases}
\end{equation*}
see \cite{chasman} for details. By imposing boundary conditions to the function $u$ we obtain a system of two equations in two unknowns $\alpha$ and $\beta$, and the associated matrix is given by \eqref{sigma_matrix}. Thus the eigenvalues must solve equation \eqref{det_sigma_equation}.
\end{proof}

We give now an alternative proof of Theorem \ref{main} when $\Omega=B$ is the unit ball in $\mathbb R^N$ centered at zero based on the explicit representations of the eigenvalues discussed in this section. We have the following theorem.

\begin{thm}\label{dirichlet_vs_neumann_ball}
Equations ${\rm det}M(\lambda,1)=0$ and \eqref{det_dirichlet} admit the same non-zero solutions.
\end{thm}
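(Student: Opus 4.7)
The plan is to reduce the matrix $M(\lambda,1)$ to an extremely simple form by exploiting the fact that $j_l(kr)H_l(\theta)$ and $i_l(kr)H_l(\theta)$ are genuine eigenfunctions of the Laplacian: specifically, $\Delta(j_l(kr)H_l(\theta))=-k^2 j_l(kr)H_l(\theta)$ and $\Delta(i_l(kr)H_l(\theta))=+k^2 i_l(kr)H_l(\theta)$. Equivalently, $j_l$ and $i_l$ satisfy the ultraspherical Bessel identities
\[
j_l''(z)+\tfrac{N-1}{z}j_l'(z)-\tfrac{l(l+N-2)}{z^2}j_l(z)=-j_l(z),\quad i_l''(z)+\tfrac{N-1}{z}i_l'(z)-\tfrac{l(l+N-2)}{z^2}i_l(z)=i_l(z).
\]

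The first step is to observe that when $\sigma=1$ the Neumann conditions in \eqref{Neumann_bad} read $\Delta u|_{r=1}=0$ and $\partial_r(\Delta u)|_{r=1}=0$. Writing $u(r,\theta)=U_l(r)H_l(\theta)$ with $U_l(r)=\alpha j_l(\sqrt[4]{\lambda}r)+\beta i_l(\sqrt[4]{\lambda}r)$, the identities above give $\Delta u=\sqrt{\lambda}(-\alpha j_l(\sqrt[4]{\lambda}r)+\beta i_l(\sqrt[4]{\lambda}r))H_l(\theta)$ and hence $\partial_r \Delta u=\sqrt[4]{\lambda^3}(-\alpha j_l'(\sqrt[4]{\lambda}r)+\beta i_l'(\sqrt[4]{\lambda}r))H_l(\theta)$. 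The two boundary conditions therefore collapse to
\[
-\alpha j_l(\sqrt[4]{\lambda})+\beta i_l(\sqrt[4]{\lambda})=0,\qquad -\alpha j_l'(\sqrt[4]{\lambda})+\beta i_l'(\sqrt[4]{\lambda})=0.
\]

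The second step is to show that this simplification is the same one the matrix $M(\lambda,1)$ undergoes algebraically, using the Bessel identity in row one and the once-differentiated Bessel identity in row two. A direct substitution in \eqref{sigma_matrix} collapses the first row entries to $-\sqrt{\lambda}\,j_l(\sqrt[4]{\lambda})$ and $\sqrt{\lambda}\,i_l(\sqrt[4]{\lambda})$, and the second row entries to $-\sqrt[4]{\lambda^3}\,j_l'(\sqrt[4]{\lambda})$ and $\sqrt[4]{\lambda^3}\,i_l'(\sqrt[4]{\lambda})$. Expanding the determinant then yields
\[
\det M(\lambda,1)=-\lambda^{5/4}\bigl[j_l(\sqrt[4]{\lambda})i_l'(\sqrt[4]{\lambda})-i_l(\sqrt[4]{\lambda})j_l'(\sqrt[4]{\lambda})\bigr],
\]
which, for $\lambda\neq 0$, vanishes if and only if the left-hand side of \eqref{det_dirichlet} does. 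Because $M(\lambda,1)$ is block-diagonal in $l$, the equivalence holds order by order in $l$, and taking the union over all $l\in\mathbb N_0$ gives the claim.

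The only mildly delicate part is the second row computation: one must differentiate the Bessel ODE once with respect to $z$, substitute $j_l''$ and $j_l'''$ (respectively $i_l''$ and $i_l'''$) into the rather intricate expression in \eqref{sigma_matrix}, and verify that the coefficients of $j_l$ and $j_l'$ (respectively $i_l$ and $i_l'$) cancel exactly, leaving only the $-z^3 j_l'$ (respectively $+z^3 i_l'$) term; this is a routine but not entirely automatic cancellation, and is where one must be careful with signs and with the parameter identity $(N-1)+l(l+N-2)+1-N-l(N+l-2)=0$ that makes the coefficient of $j_l'$ collapse.
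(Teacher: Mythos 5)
Your proof is correct, and it takes a genuinely different route from the paper's. The paper expands $\det M(\lambda,1)$ into six Wronskian-type combinations $j_l^{(m)}i_l^{(n)}-i_l^{(m)}j_l^{(n)}$ and then invokes the Bessel recurrence formulas to reduce each one, relying on a massive cancellation to arrive at \eqref{F_short}. You instead simplify each \emph{entry} of $M(\lambda,1)$ before taking the determinant, using only the ultraspherical Bessel ODE and its first derivative: writing $z=\sqrt[4]{\lambda}$ and $L=l(l+N-2)$, the first-row entries are exactly $z^2j_l''+(N-1)zj_l'-Lj_l=-z^2j_l$ and $z^2i_l''+(N-1)zi_l'-Li_l=z^2i_l$, and the second-row entries are $z^3j_l'''+(N-1)z^2j_l''+z(1-N-L)j_l'+2Lj_l=-z^3j_l'$ and its counterpart $+z^3i_l'$ (I checked the cancellation of the $j_l'$ coefficient via $L+N-1+1-N-L=0$ and of the $j_l$ coefficient; both work out). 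This yields $\det M(\lambda,1)=-\lambda^{5/4}\bigl(j_l i_l'-i_l j_l'\bigr)$, which differs from \eqref{F_short} only by an overall sign that is immaterial for the zero set. What your approach buys is transparency: it makes visible that for $\sigma=1$ the two rows of $M$ are nothing but the Dirichlet conditions $w|_{r=1}=\partial_r w|_{r=1}=0$ applied to $w=\Delta u=\sqrt{\lambda}(-\alpha j_l+\beta i_l)H_l$, so the ball computation becomes a direct mirror of the abstract correspondence $w=\Delta u$ used in the proof of Theorem \ref{main}, and it avoids the error-prone bookkeeping of the six identities \eqref{string1}--\eqref{stringN}. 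What the paper's computation buys is an explicit catalogue of those Wronskian reductions, which could be reused for $\sigma\neq 1$ where no such collapse of the rows occurs.
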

\begin{proof}
We consider \eqref{sigma_matrix} with $\sigma=1$. Let $\lambda>0$ be a solution of ${\rm det}M(\lambda,1)=0$. We compute  $F(\lambda)={\rm det}M(\lambda,1)$. We have
\begin{multline}\label{F_long}
F(\lambda)=-\sqrt[4]{\lambda}l(l-1)(N+l-2)(N+l-1)\left(j_l(\sqrt[4]{\lambda})i_l'(\sqrt[4]{\lambda})-i_l(\sqrt[4]{\lambda})j_l'(\sqrt[4]{\lambda})\right)\\
+\sqrt{\lambda}l(N+1)(l+N-2)\left(j_l(\sqrt[4]{\lambda})i_l''(\sqrt[4]{\lambda})-i_l(\sqrt[4]{\lambda})j_l''(\sqrt[4]{\lambda})\right)\\
-\lambda^{3/4}(N(N-1)+l(N+l-2))\left(j_l'(\sqrt[4]{\lambda})i_l''(\sqrt[4]{\lambda})-i_l'(\sqrt[4]{\lambda})j_l''(\sqrt[4]{\lambda})\right)\\
+\lambda^{3/4}l(l+N-2)(j_l(\sqrt[4]{\lambda})i_l'''(\sqrt[4]{\lambda})-i_l(\sqrt[4]{\lambda})j_l'''(\sqrt[4]{\lambda}))\\
-\lambda(N-1)\left(j_l'(\sqrt[4]{\lambda})i_l'''(\sqrt[4]{\lambda})-i_l'(\sqrt[4]{\lambda})j_l'''(\sqrt[4]{\lambda})\right)\\
+\lambda^{5/4}\left(j_l''(\sqrt[4]{\lambda})i_l'''(\sqrt[4]{\lambda})-i_l''(\sqrt[4]{\lambda})j_l'''(\sqrt[4]{\lambda})\right).
\end{multline} 
We set $C_l^{\pm}(z)=I_{\frac{N}{2}+l}(z)J_{\frac{N}{2}-1+l}(z)\pm I_{\frac{N}{2}-1+l}(z)J_{\frac{N}{2}+l}(z)$. We use the well-known recurrence formulas for Bessel functions and their derivatives (see \cite[9.1.27 and 9.6.26]{abram}) to get
\small
\begin{equation}
j_l(z)i_l'(z)-i_l(z)j_l'(z)=z^{2-N}C_l^+(z),\label{string1}
\end{equation}
\begin{equation}
j_l(z)i_l''(z)-i_l(z)j_l''(z)=z^{1-N}\left(2zI_{\frac{N}{2}-1+l}(z)J_{\frac{N}{2}-1+l}(z)-(N-1)C_l^+(z)\right),
\end{equation}
\begin{multline}
j_l'(z)i_l''(z)-i_l'(z)j_l''(z)\\=z^{-N}\left(z^2C_l^-(z)+2lzI_{\frac{N}{2}-1+l}(z)J_{\frac{N}{2}-1+l}(z)-l(l+N-2)C_l^+(z)\right),
\end{multline}
\begin{multline}
j_l(z)i_l'''(z)-i_l(z)j_l'''(z)\\=z^{-N}\left(z^2C_l^-(z)+2(1-N+l)zI_{\frac{N}{2}-1+l}(z)J_{\frac{N}{2}-1+l}(z)\right.\\
\left.+(N(N-1)+l(l+N-2))C_l^+(z)\right),
\end{multline}
\begin{multline}
j_l'(z)i_l'''(z)-i_l'(z)j_l'''(z)\\=z^{-1-N}\left(-2z^3I_{\frac{N}{2}+l}(z)J_{\frac{N}{2}+l}(z)+(1-N+2l)z^2C_l^-(z)\right.\\
\left.+2l(1-N+l)zI_{\frac{N}{2}-1+l}(z)J_{\frac{N}{2}-1+l}(z)+l(l+N-2)(N+1)C_l^+(z)\right),
\end{multline}
\begin{multline}
j_l''(z)i_l'''(z)-i_l''(z)j_l'''(z)\\=z^{-2-N}\left(-z^4C_l^+(z)+2(N-1)z^3I_{\frac{N}{2}+l}(z)J_{\frac{N}{2}+l}(z)-(N+1)(2l+1)z^2C_l^-(z)\right.\\
\left.-2(N-3)(l-1)lzI_{\frac{N}{2}-1+l}(z)J_{\frac{N}{2}-1+l}(z)+l(l-1)(l+N-2)(l+N-1)C_l^+(z)\right).\label{stringN}
\end{multline}
\normalsize
Thanks to \eqref{string1}-\eqref{stringN}, expression \eqref{F_long} simplifies to
\begin{equation}\label{F_short}
F(\lambda)=\lambda^{5/4}\left(j_l(\sqrt[4]{\lambda})i_l'(\sqrt[4]{\lambda})-i_l(\sqrt[4]{\lambda})j_l'(\sqrt[4]{\lambda})\right).
\end{equation}
Therefore by comparing \eqref{F_short} with \eqref{det_dirichlet} we see that the non-zero eigenvalues of problem \eqref{Neumann_bad} and the eigenvalues of problem \eqref{Dirichlet} on the unit ball coincide.
\end{proof}
\begin{rem}
From Theorem \ref{neumann_sigma_0} it follows that all the eigenvalues $\lambda_j(\sigma)\rightarrow 0$ as $\sigma\rightarrow 1^-$. This means that there are infinitely many branches of solutions $\sigma\mapsto\lambda(\sigma)$ of equation \eqref{det_sigma_equation} such that $\lambda(\sigma)\rightarrow 0$ as $\sigma\rightarrow 1^-$. Theorem \ref{dirichlet_vs_neumann_ball} shows that there are also infinitely many branches $\sigma\mapsto\lambda(\sigma)$ such that $\lambda(\sigma)\rightarrow\mu$ as $\sigma\rightarrow 1^-$, for some solution $\mu>0$ to equation \eqref{det_dirichlet} (see Figure \ref{fig1}).
\end{rem}

\begin{figure}[ht!]
 \centering
    \includegraphics[width=0.6\textwidth]{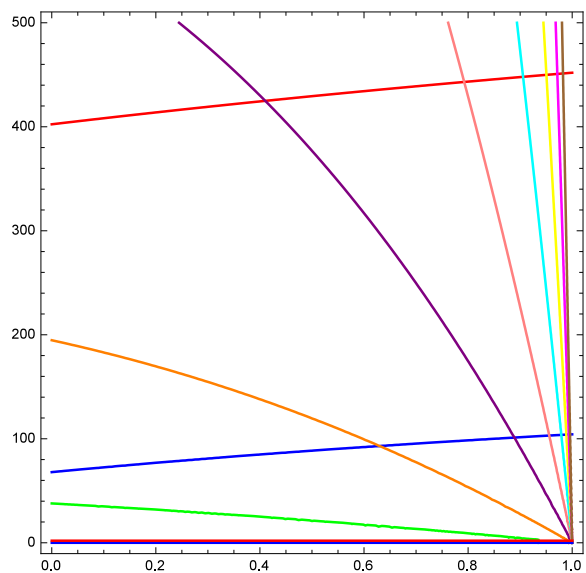}
		 \caption{ Solution branches  of equation \eqref{det_sigma_equation} with $N=2$ for $(\sigma,\lambda)\in]0,1[\times]0,500[$ . The color refers to the choice of $l$ in \eqref{det_sigma_equation}: blue ($l=0$), red ($l=1$), green ($l=2$), orange ($l=3$), purple ($l=4$), pink ($l=5$), cyan ($l=6$), yellow ($l=7$), magenta ($l=8$), brown ($l=9$).}
		\label{fig1}
\end{figure}


{\bf Acknowledgments.} The author is grateful to Professor Pier Domenico Lamberti, to Doctor Davide Buoso and to Francesco Ferraresso for useful suggestions and fruitful discussions on the argument. The author gratefully acknowledges the anonymous referee for the careful reading of the manuscript and for his useful comments. The author acknowledges financial support from the research project `Singular perturbation problems for differential operators' Progetto di Ateneo of the University of Padova. The author also acknowledges financial support from the research project `INdAM GNAMPA Project 2015 - Un approccio funzionale analitico per problemi di perturbazione singolare e di omogeneizzazione'.  The author is member of the Gruppo Nazionale per l'Analisi Matematica, la Probabilit\`a e le loro Applicazioni (GNAMPA) of the Istituto Naziona\-le di Alta Matematica (INdAM).

\bibliography{bibliography}{}
\bibliographystyle{abbrv}
\end{document}